\newcommand\blfootnote[1]{%
	\begingroup
	\renewcommand\thefootnote{}\footnote{#1}%
	\addtocounter{footnote}{-1}%
	\endgroup
}
\newtheorem{theorem}{Theorem}[section]
\newtheorem{lemma}[theorem]{Lemma}
\newtheorem{definition}[theorem]{Definition}
\DeclareMathOperator{\diag}{diag}
\DeclareMathOperator{\Irep}{Irep}
\DeclareMathOperator{\rank}{rank}
\DeclareMathOperator{\Sp}{Sp}
\def\C{\mathbb C}
\def\G{\Gamma}
\def\Re{\mathbb R}
\def\Z{\mathbb Z}
\begin{document}


\title{A note on the spectra and eigenspaces of the universal adjacency matrices of arbitrary lifts of graphs
	}
\author{C. Dalf\'o\footnote{Departament  de Matem\`atica, Universitat de Lleida, Igualada (Barcelona), Catalonia, {\tt{cristina.dalfo@matematica.udl.cat}}},
	M. A. Fiol\footnote{Departament de Matem\`atiques, Universitat Polit\`ecnica de Catalunya; and Barcelona Graduate School of Mathematics, Barcelona, Catalonia, {\tt{miguel.angel.fiol@upc.edu}}},
	 S. Pavl\'ikov\'a\footnote{Department of Mathematics and Descriptive Geometry,
	 	Slovak University of Technology, Bratislava, Slovak Republic,
	 	{\tt {sona.pavlikova@stuba.sk}}}, and
	J. \v{S}ir\'a\v{n}\footnote{Department of Mathematics and Statistics,
		The Open University, Milton Keynes, UK; and
		Department of Mathematics and Descriptive Geometry,
		Slovak University of Technology, Bratislava, Slovak Republic,
		{\tt {j.siran@open.ac.uk}}}
}
\date{}

\maketitle


\begin{abstract}
The universal adjacency matrix $U$ of a graph $\G$, with adjacency matrix $A$, is a linear combination of $A$, the diagonal matrix $D$ of vertex degrees, the identity matrix $I$, and the all-1 matrix $J$ with real 
coefficients, that is, $U=c_1 A+c_2 D+c_3 I+ c_4 J$, with $c_i\in \Re$ and $c_1\neq 0$. Thus, as particular cases, $U$ may be the adjacency matrix, the Laplacian, the signless Laplacian, and the Seidel matrix.
In this note, we show that basically the same method introduced before by the authors can be applied for determining the spectra and bases of all the corresponding eigenspaces of arbitrary lifts of graphs (regular or not).
\end{abstract}

\noindent{\it Keywords:}
Graph; covering; lift; universal adjacency matrix; spectrum; eigenspace.

\noindent{\it 2010 Mathematics Subject Classification:} 05C20, 05C50, 15A18.

\blfootnote{\begin{minipage}[l]{0.3\textwidth} \includegraphics[trim=10cm 6cm 10cm 5cm,clip,scale=0.15]{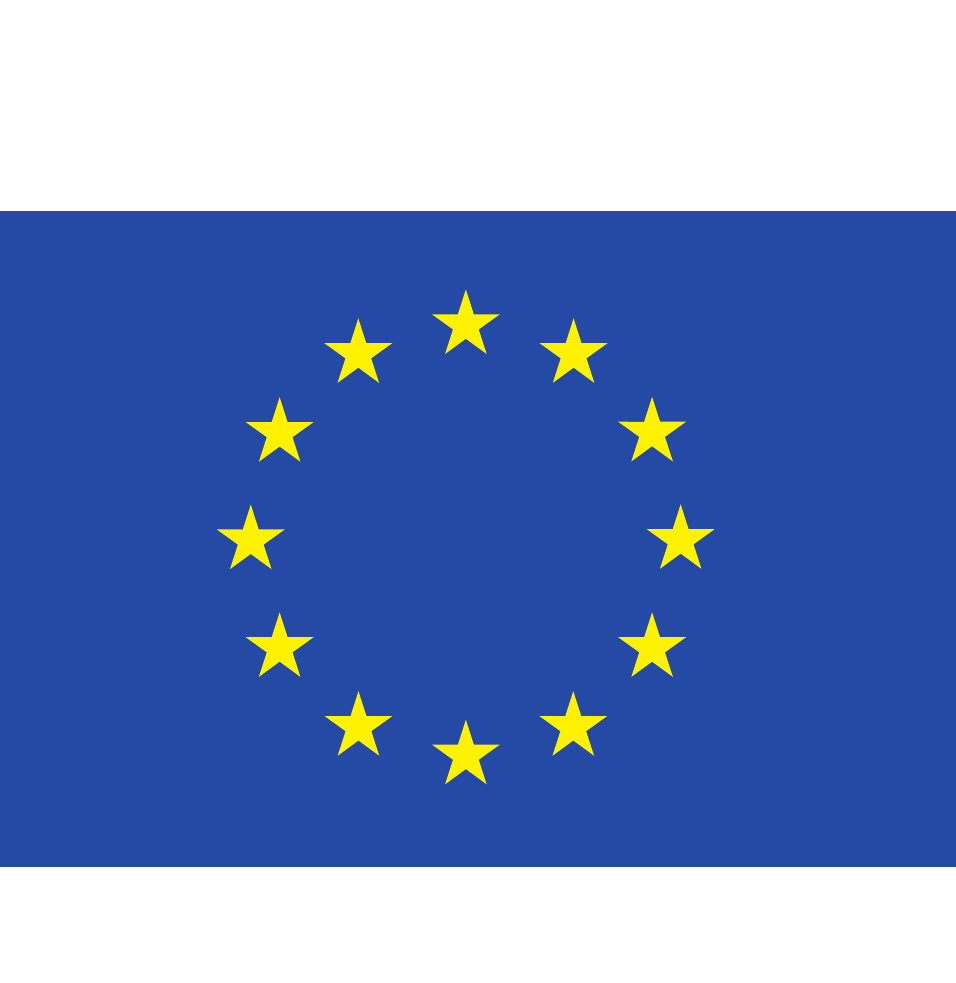} \end{minipage}  \hspace{-2cm} \begin{minipage}[l][1cm]{0.79\textwidth}
The first author has received funding from the European Union's Horizon 2020 research and innovation programme under the Marie Sk\l{}odowska-Curie grant agreement No 734922.
\end{minipage}}

\section{Introduction}\label{sec:int}

For a graph $\Gamma$ on $n$ vertices, with adjacency matrix $A$ and degree sequence $d_1,\ldots, d_n$,
the {\em  universal adjacency matrix} is defined as $U=c_1 A+c_2 D+c_3 I+ c_4 J$,  with $c_i\in \Re$ and $c_1\neq 0$, where $D=\diag(d_1,\ldots,d_n)$, $I$ is the identity matrix, and $J$ is the all-1 matrix.
See, for instance, Haemers and Omidi \cite{ho11} or
Farrugia and Sciriha  \cite{fc15}.
Thus, for given values of the coefficients $(c_1,c_2,c_3,c_4)$, the universal adjacency matrix particularizes to important matrices used in  algebraic graph theory, such as the adjacency matrix $(1,0,0,0)$, the Laplacian $(-1,1,0,0)$, the signless Laplacian $(1,1,0,0)$, and the Seidel matrix $(-2,0,-1,1)$.

Because of the interest in the study of combinatorial properties, some methods were proposed to determine the spectra of  graphs with some symmetries. This includes graphs with transitive groups (Lov\'asz \cite{l75}),
 Cayley graphs (Babay \cite{ba79}), and
 some lifts of  voltage graphs, or covers, (Godsil and  Henseland \cite{gh92}). More recently,
by using representation theory (see, for instance, Burrow~\cite{b93}, and James and Liebeck~\cite{JaLi}),    Dalf\'o, Fiol, and \v{S}ir\'a\v{n} \cite{dfs19}  considered a more general construction and derived a method for determining the spectrum of a regular lift of a `base' (di)graph equipped with an ordinary voltage assignment, or, equivalently, the spectrum of a regular cover of a (di)graph. Recall, however, that by far not all coverings are regular; a description of arbitrary graph coverings by the so-called permutation voltage assignments was given by Gross and Tucker \cite{gt77}. In  \cite{dfps19},  the authors generalized their previous results to arbitrary lifts of graphs (regular or not). They not only gave the complete spectra of lifts, but also provided bases of the corresponding eigenspaces, both in a very explicit way.
In this note, we show that (basically) the same method of \cite{dfps19} can be applied to find the spectra of lifts and their eigenspaces of a universal adjacency matrix.

This note is organized as follows. In the next section, we recall
the basic notions of permutation voltage assignments on a graph, together with their associated lifts, along with an equivalent description of these in terms of relative voltage assignments.
Section \ref{sec:main-result} deals with the main result, where the complete spectrum of the universal adjacency matrix of a relative lift graph is determined, together with bases of the associated eigenspaces. Finally, our method is illustrated by an example in Section \ref{sec:example}.

\section{General lifts of a graph}
Let $\Gamma$ be an undirected graph (possibly with loops and multiple edges), and let $n$ be a positive integer. As usual in algebraic and topological graph theory, we think of every undirected edge joining vertices $u$ and $v$ (not excluding the case when $u=v$) as consisting of a pair of oppositely directed arcs; if one of them is denoted $a$, then the opposite one is denoted $a^-$. Let $V=V(\Gamma)$ and $X=X(\Gamma)$ be the sets of vertices and arcs of $\Gamma$, respectively.

The following concepts were introduced by Gross in \cite{g74}. Let $G$ be a group. An ({\em ordinary\/}) {\em voltage assignment} on the graph $\Gamma$ is a mapping $\alpha: X\to G$ with the property that $\alpha(a^-)=(\alpha(a))^{-1}$ for every arc $a\in X$. Thus, a voltage assigns an element $g\in G$ to each arc of the graph in such a way that a pair of mutually reverse arcs $a$ and $a^{-}$, forming an undirected edge, receive mutually inverse elements $g$ and $g^{-1}$. The graph $\Gamma$ and the permutation voltage assignment $\alpha$ determine a new graph $\Gamma^{\alpha}$, called the {\em lift} of $\Gamma$, which is defined as follows. The vertex and arc sets of the lift are simply the Cartesian products $V^{\alpha}=V\times G$ and $X^{\alpha}=X\times G$, respectively. Moreover, for every arc $a\in X$ from a vertex $u$ to a vertex $v$ for $u,v\in V$ (possibly, $u=v$) in $\Gamma$, and for every $g\in G$, there is an arc $(a,g)\in X^{\alpha}$ from the vertex $(u,g)\in V^{\alpha}$ to the vertex $(v,g\alpha(a))\in V^{\alpha}$.
Notice that
if $a$ and $a^{-}$ are a pair of mutually reverse arcs forming an undirected edge of $G$, then for every $g\in G$ the pair $(a,g)$ and $(a^-,g\alpha(a))$ form an undirected edge of the lift $\Gamma^{\alpha}$, making the lift an undirected graph.
The mapping $\pi:\Gamma^{\alpha}\to \Gamma$ defined by erasing the second coordinate, that is, $\pi(u,g)=u$ and $\pi(a,g)=a$, for every $u\in V$, $a\in X$ and $g\in G$, is a (regular) {\em covering}, with its usual meaning in algebraic topology; see, for instance, Gross and Tucker \cite{gt87}.

To generate all  (not necessarily regular) graph coverings, Gross and Tucker \cite{gt77} introduced the more general concept of  permutation voltage assignment. With this aim, let
$G$ to be a subgroup of the symmetric group ${\rm Sym}(n)$, that is, a permutation group on the set $[n]=\{1,2, \ldots, n\}$. Then, a {\em permutation voltage assignment} on  $\Gamma$ is a mapping $\alpha: X\to G$ with the same symmetric property as before. So, the lift $\Gamma^{\alpha}$, with vertex set
$V^{\alpha}=V\times [n]$ and arc set $X^{\alpha}=X\times [n]$, has
an arc $(a,i)\in X^{\alpha}$ from the vertex $(u,i)$ to the vertex $(v,j)$ if and only if  $a=(u,v)\in X$ and $j=i\alpha(a)$. Notice that we write the argument of a permutation to the left of the symbol of the permutation, with the composition being read from the left to the right.

The permutation voltage assignments and the corresponding lifts can, equivalently, be described in the language of the so-called relative voltage assignments defined as follows. Let $\Gamma$ be the graph considered above, $K$ a group and $H$ a subgroup of $K$ of index $n$; and let $K/H$ denote the set of right cosets of $H$ in $K$. Furthermore, let $\beta: X\to K$ be a mapping satisfying $\beta(a^-)= (\beta(a))^{-1}$ for every arc $a\in X$; in this context, one calls $\beta$ a {\em voltage assignment in $K$ relative to $H$}, or simply a relative voltage assignment. The {\em relative lift} $\Gamma^{\beta}$ has vertex set $V^{\beta}= V\times K/H$ and arc set $X^{\beta}=X\times K/H$. The incidence in the lift is given as expected: If $a$ is an arc from a vertex $u$ to a vertex $v$ in $\Gamma$, then for every right coset $J\in K/H$ there is an arc $(a,J)$ from the vertex $(u,J)$ to the vertex $(v,J\beta(a))$ in $\Gamma^{\beta}$. We slightly abuse the notation and denote by the same symbol $\pi$ the covering $\Gamma^{\beta}\to \Gamma$ given by suppressing the second coordinate.

Under additional and very natural assumptions, there is a $1$-to-$1$ correspondence between {\em connected} lifts generated by permutation and relative voltage assignments on a connected base graph $\Gamma$. To present more details, fix a vertex $u$ in the base graph and let ${\cal W}_u$ be the collection of all closed walks in the base graph that begin and end at $u$. If $\gamma$ is either a permutation or a relative voltage assignment on $\Gamma$ in some $G\le {\rm Sym}(n)$ or in some group $K$ relative to a subgroup $H$, respectively, and if $W=a_1a_2\ldots, a_k$ is a sequence of arcs forming a walk in ${\cal W}_u$, we let $\gamma(W)=\gamma(a_1) \gamma(a_2)\ldots \gamma(a_k)$ denote the product of the voltages taken as the walk is traversed. Then, the set $\{\gamma(W);\ W\in {\cal W}_u\}$ forms a subgroup of $G$ or $K$, known as the {\em local group}, and it will be denoted by $G_u$ and $K_u$, respectively. It is well known that there is no loss of generality in the study of voltage assignments and lifts under the assumption that $G_u=G$ and $K_uH=K$, meaning that there are no `superfluous' voltages in the assignments.

We are now ready to explain the correspondence between the two kinds of lifts. First, let $\alpha$ be a permutation voltage assignment as in the previous paragraph, taking place in a permutation group $G\le {\rm Sym}(n)$. Assume that $G=G_u$ as above and, moreover, that $G$ is transitive on $[n]$; the two conditions together are equivalent to the connectivity of the lift. Then, the corresponding relative voltage group is $K=G$, with the subgroup $H$ being the stabilizer in $K$ of an arbitrarily chosen point in the set $[n]$. The corresponding relative assignment $\beta$ is simply identical to $\alpha$, but acting by right multiplication on the set $K/H$. Observe that, in this construction, the core of $H$ in $K$, that is, the intersection of all $K$-conjugates of $H$, is trivial. Conversely, let $\beta$ be a voltage assignment in a group $K$ relative to a subgroup $H$ of index $n$ and with a trivial core in $K$. Assume again that $K_uH=K$. Now, this alone guarantees the connectivity of the lift. Then, one may identify the set $K/H$ with $[n]$ in an arbitrary way, and $\alpha(a)$ for $a\in X$ is the permutation of $[n]$ induced (in this identification) by right multiplication on the set of (right) cosets $K/H$ by $\beta(a)\in K$.

We note that there are fine points in this correspondence to be considered on the `permutation' side if $G_u$ is intransitive or a proper subgroup of $G$, and on the `relative' side if $H$ is not core-free or if $K_uH$ is proper in $K$; these details are, however, irrelevant for our purposes. We also point out that a covering described in terms of a permutation voltage assignment as above is regular (that is, generated by an ordinary voltage assignment) if and only if the corresponding voltage group is a regular permutation group on the set $[n]$. Of course, this translates to the language of relative voltage assignments by the normality of $H$ in $K$. In such a case, the covering admits a description in terms of ordinary voltage assignment in the factor group $K/H$ and with voltage $H\beta(a)$ assigned to an arc $a\in X$ with original relative voltage $\beta(a)$.

\section{The spectrum of the universal adjacency matrix of a relative lift}
\label{sec:main-result}

Let $\Gamma$ be a connected graph on $k$ vertices (with loops and multiple edges allowed), and let $\beta$
be a relative voltage assignment on the arc set $X$ of $\Gamma$ in a  group
$G$ with identity element $e$, and subgroup $H$ of index $n$.
Now  we show that the  spectrum of (the universal adjacency matrix of) relative lift $\Gamma^{\beta}$  may be computed by using the same result by the authors in \cite{dfps19}. The only basic difference is the so-called base matrix, which in  our general case, must be defined as follows.

\begin{definition}
\label{B(U)}
To the pair $(\Gamma,\alpha)$ as above, we assign the $k\times k$ {\em universal base matrix} $B(U)$ defined by the sum
$$
B(U)=c_1B(A)+c_2B(D)+c_3B(I)+c_4B(J),
$$
where the matrices $B(A)+B(D)+B(I)+B(J)$ have entries as follows:
\begin{itemize}
\item
$B(A)_{uv}=\alpha(a_1)+\cdots +\alpha(a_j)$  if $a_1,\ldots,a_j$ is the set of all the arcs of $\Gamma$  from $u$ to $v$, not excluding the case $u=v$, and $B(A)_{uv}=0$ if $(u,v)\not\in X$;
\item
$B(D)_{uu}=\deg(u)e$, and $B(D)_{uv}=0$ if $u\neq v$;
\item
$B(I)_{uu}=e$, and $B(I)_{uv}=0$ if $u\neq v$;
\item
$B(J)_{uv}=\pi^0+\pi^1+\cdots+\pi^{n-1}$, where $\pi(=\pi^1)=(12\ldots n)\in Sym(n)$, for any $u,v\in V$.
\end{itemize}
Recall that $e(=\pi^0)$ stands for the identity element of $G$, and the sum must be seen as an element of the complex group algebra $\C(G)$.
\end{definition}

Let $\rho \in \Irep(G)$ be a unitary irreducible representation of $G$ of dimension $d_\rho$. For our graph $\Gamma$ on $k$ vertices, the assignment $\alpha$ in $G$ relative to $H$, and the universal base matrix $U$, let $\rho(U)$ be the $d_\rho k\times d_\rho k$ matrix obtained from $U$ by replacing every non-zero entry $(U)_{u,v} \in \C(G)$ as above by the $d_\rho\times d_\rho$ matrix $\rho(U_{u,v})$, where each element $g$ of the group is replaced by $\rho(g)$, and
the zero entries of $B$ are changed to all-zero $d_\rho\times d_\rho$ matrices. We refer to $\rho(U)$ as the {\em $\rho$-image} of the universal base matrix $U$.

In the following theorem, whose particular case of the adjacency spectrum was given by the authors in \cite{dfps19}, we use the rank of the matrix $\rho(H)= \sum_{h\in H}\rho(h)$. Also, for every $\rho\in \Irep(G)$, we consider the $\rho$-image of the universal base matrix $U$, and we let $\Sp(\rho(U))$ denote the spectrum of $\rho(U)$, that is, the multiset of all the $d_\rho k$ eigenvalues of the matrix $\rho(U)$. Finally, the notation $\rank(\rho(H))\cdot \Sp(\rho(B))$ denotes the multiset of $\rank(\rho(H))\cdot d_\rho k$ values obtained by taking each of the $d_\rho k$ entries of the spectrum $\Sp(\rho(B))$ exactly $\rank(\rho(H))$ times. In particular, if $\rank(\rho(H))=0$, the set $\rank(\rho(H))\cdot \Sp(\rho(B))=\emptyset$.

\begin{theorem}
\label{t:spec-univ}
Let $\Gamma$ be a base graph of order $k$ and let $\alpha$ be a voltage assignment on $\Gamma$ in a group $G$ relative to a subgroup $H$ of index $n$ in $G$. Then, the multiset of the $kn$ eigenvalues of the universal adjacency matrix $U^{\alpha}$ for the relative lift $\Gamma^{\alpha}$ is
\begin{equation}
\label{eq:t:spec-univ}
\Sp( U^{\alpha})=\bigcup_{\rho\in \Irep(G)}\rank(\rho(H))\cdot  \Sp(\rho(B(U))).
\end{equation}
\end{theorem}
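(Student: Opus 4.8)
The plan is to realise the whole universal adjacency matrix of the lift as the entrywise image of a single matrix over the group algebra, so that the representation-theoretic machinery of \cite{dfps19} applies essentially verbatim. Let $\phi\colon\C(G)\to\mathrm{Mat}_n(\C)$ be the algebra homomorphism extending linearly the permutation representation $g\mapsto P(g)$ of $G$ on the $n$ right cosets $G/H$, where $P(g)$ is the permutation matrix of right multiplication by $g$. Reading $B(U)$ as a $k\times k$ matrix with entries in $\C(G)$, I would show that $U^{\alpha}$ is obtained from $B(U)$ by applying $\phi$ to every entry; equivalently, $U^{\alpha}=(\mathrm{id}_k\otimes\phi)(B(U))$ when $B(U)$ is viewed in $\mathrm{Mat}_k(\C(G))$, so that the $(u,v)$ block of $U^{\alpha}$ is $\phi(B(U)_{uv})$. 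Since both $U=c_1A+c_2D+c_3I+c_4J$ and $\phi$ are linear, it suffices to check this blockwise for each of the four constituents separately.

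For the adjacency part the $(u,v)$ block of $A^{\alpha}$ is $\sum_a P(\alpha(a))=\phi(B(A)_{uv})$, summed over the arcs $a$ from $u$ to $v$; this is exactly the identity underpinning \cite{dfps19}. The degree and identity parts are immediate, as $D^{\alpha}$ and $I^{\alpha}$ are block-diagonal with blocks $\deg(u)I_n=\phi(\deg(u)e)$ and $I_n=\phi(e)$. The one delicate point, and the place where genuinely new checking is needed, is the all-ones constituent: every block of $J^{\alpha}$ equals the dense matrix $J_n$, which a priori is not of the form $\phi(x)$. Here I would use that the coset action of $G$ on $G/H$ is transitive, whence $\sum_{g\in G}P(g)=(|G|/n)J_n$ and therefore $J_n=\phi\bigl((n/|G|)\sum_{g\in G}g\bigr)$ does lie in the image of $\phi$; this is precisely what the cyclic sum $\sum_{i=0}^{n-1}\pi^i=J_n$ in Definition~\ref{B(U)} encodes, and it brings $B(J)$ into the common group-algebra framework.

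With all four blocks expressed through $\phi$, the argument is finished by decomposing the permutation representation into irreducibles. The module $\C(G/H)\cong\mathrm{Ind}_H^G\mathbf 1$ contains each $\rho\in\Irep(G)$ with multiplicity $\dim V_\rho^{H}$ by Frobenius reciprocity; since $\tfrac1{|H|}\rho(H)$ is the orthogonal projection of $V_\rho$ onto its $H$-fixed subspace $V_\rho^{H}$, this multiplicity equals $\rank(\rho(H))$. Fixing a unitary $S$ realising $\phi(x)=S\bigl(\bigoplus_{\rho}I_{\rank(\rho(H))}\otimes\rho(x)\bigr)S^{-1}$ simultaneously for all $x\in\C(G)$, conjugation of $U^{\alpha}$ by $I_k\otimes S$ turns it (up to a fixed permutation of the basis) into a block-diagonal matrix in which the block $\rho(B(U))=(\mathrm{id}_k\otimes\rho)(B(U))$ occurs exactly $\rank(\rho(H))$ times. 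Taking spectra yields \eqref{eq:t:spec-univ}, and the count $\sum_\rho\rank(\rho(H))\,d_\rho k=k\sum_\rho\rank(\rho(H))\,d_\rho=kn$ confirms that every eigenvalue of $U^{\alpha}$ is accounted for.

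I expect the all-ones term to be the only real obstacle. For the three remaining constituents the reduction to \cite{dfps19} is routine, and the main conceptual ingredient—the identification of the multiplicities $\rank(\rho(H))$ with the isotypic decomposition of the coset module—is already available there. The work specific to this note is therefore to confirm that $J^{\alpha}$ fits the same pattern: that its dense blocks are captured by the group-algebra element $(n/|G|)\sum_{g\in G}g$, and that its $\rho$-image behaves correctly (vanishing for every non-trivial $\rho$ by the orthogonality relations, and equal to $nJ_k$ for the trivial representation), so that the single intertwiner $S$ block-diagonalises all four pieces at once.
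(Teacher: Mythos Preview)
Your proposal is correct and, in fact, slightly more streamlined than the paper's own argument. The paper does not block-diagonalise $U^{\alpha}$ directly; instead it follows \cite[Th.~4.1]{dfps19} step by step: for each $\rho\in\Irep(G)$ one takes an eigenvector of $\rho(B(U))$, \emph{lifts} it explicitly to (at most $d_\rho$) candidate eigenvectors of $U^{\alpha}$, shows that exactly $\rank(\rho(H))\cdot d_\rho$ of these are linearly independent and independent across different $\rho$'s, and finally invokes the identity $\sum_\rho \rank(\rho(H))\,d_\rho=n$ to conclude that everything has been found. Your approach collapses steps 3--5 into a single conjugation: you realise $U^{\alpha}=(\mathrm{id}_k\otimes\phi)(B(U))$ and then use the isotypic decomposition of the coset module $\C(G/H)$ (with multiplicities read off via Frobenius reciprocity as $\rank(\rho(H))$) to block-diagonalise the whole matrix at once. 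The eigenvector-lifting of \cite{dfps19} is precisely the coordinate shadow of your intertwiner $I_k\otimes S$, so the two arguments are equivalent, but yours makes the structure more transparent and avoids the bookkeeping of independence. Your treatment of the $J$-term via $(n/|G|)\sum_{g\in G}g$ is also cleaner than Definition~\ref{B(U)}'s cyclic sum $\pi^0+\cdots+\pi^{n-1}$: both map to $J_n$ under $\phi$ and vanish under every non-trivial $\rho$ occurring in $\C(G/H)$, but your element lives in $\C(G)$ unconditionally, whereas the $n$-cycle $\pi$ need not belong to $G$ in general. What the paper's route buys is an explicit description of eigenvectors of $U^{\alpha}$ in the original coordinates, which your similarity argument gives only implicitly through $S$.
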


\begin{proof}
Let $U^{\alpha}$ be the universal adjacency matrix $U=c_1A+c_2D+c_3I+c_4J$, for some constants $c_i\in \Re$, of the relative lift $\Gamma^{\alpha}$ with vertex set $V^{\alpha}=V\times G/H$, where $V$ is the vertex set of the base graph $\Gamma$, and $G/H$ is the set of right cosets of $H$ in $G$. In more detail, $U^{\alpha}$ is a $kn\times kn$ matrix indexed by ordered pairs $(u,J)\in V^{\alpha}$ whose entries are given as follows.
If there is no arc from $u$ to $u'$ for $u,u'\in V$ in $\Gamma$, then there is no arc from any vertex $(u,J)$ to any vertex $(u',J')$ for $J,J'\in G/H$ in the lift $\Gamma^{\alpha}$. So, in this case, the $(u,J)(u',J')$ entry of $U^{\alpha}$ is $c_2\deg(u)+c_3+c_4$.
In the case of existing adjacencies, if for some $g\in G$, there are exactly $t$ arcs $a_1,\ldots,a_t$ from a vertex $u$ to a vertex $v$ in $\Gamma$ that carry the voltage $g=\alpha(a_1)=\cdots =\alpha(a_t)$, then for every right coset $J\in G/H$ the $(u,J)(v,Jg)$-th entry of $U^{\alpha}$ is equal to $c_1t+c_2\deg(u)+c_3+c_4$.

From this,  the proof is basically the same as in \cite[Th. 4.1]{dfps19}, so that now we only sketch it. The steps  are the following:
\begin{enumerate}
\item
Depending on the coefficients $c_i$ with $1\le i\le 4$, consider the corresponding universal base $k\times k$ matrix
of $\Gamma$ relative to $H$, according to Definition \ref{B(U)}.
\item
For each irreducible representation $\rho\in \Irep(G)$, with dimension $d_{\rho}$,  consider the $\rho$-image $\rho(B(U))$ of the universal base matrix.
\item
Prove that every eigenvector of $\rho(B(U))$ can be ``lifted'' to at most $d_{\rho}$ eigenvectors of the corresponding universal adjacency matrix $U^{\alpha}$ of the lift $\Gamma^{\alpha}$. Thus proving that the spectrum of $\rho(B(U))$ is contained in the whole spectrum of $U^{\alpha}$.
\item
Prove that, for each $\rho\in \Irep(G)$, there are exactly $\rank(\rho(H))\cdot d_{\rho}$  independent eigenvectors of  $U^{\alpha}$ obtained at step 3, and that they also are independent of the other eigenvectors obtained from any other irreducible  representation $\rho'\in \Irep(G)$.
\item
Prove that the total number of obtained independent eigenvalues of $U^{\alpha}$, that is $\sum_{\rho\in\Irep(G)} \rank(\rho(H))\cdot d_{\rho}$, equals $n$, the number of vertices of $\Gamma^{\alpha}$, see \cite[Prop. 3.3 ]{dfps19}.
\end{enumerate}
Notice that the proof is constructive, in the sense that we determine bases for all the eigenspaces of the relative lift $\Gamma^{\alpha}$.
\end{proof}

In particular, when $H$ is the trivial subgroup, we get an ordinary voltage assignment on $G$, and  \eqref{eq:t:spec-univ} becomes
\begin{equation}
\label{eq:t:spec-univ-ord}
\Sp( U^{\alpha})=\bigcup_{\rho\in \Irep(G)}\dim_{\rho}\cdot  \Sp(\rho(B(U)))
\end{equation}
since $\rank(\rho(H))=\rank(e)=\dim(\rho)$.

\section{An example}
\label{sec:example}

\begin{figure}[t]
	\begin{center}
		\scalebox{0.78}
		{\includegraphics{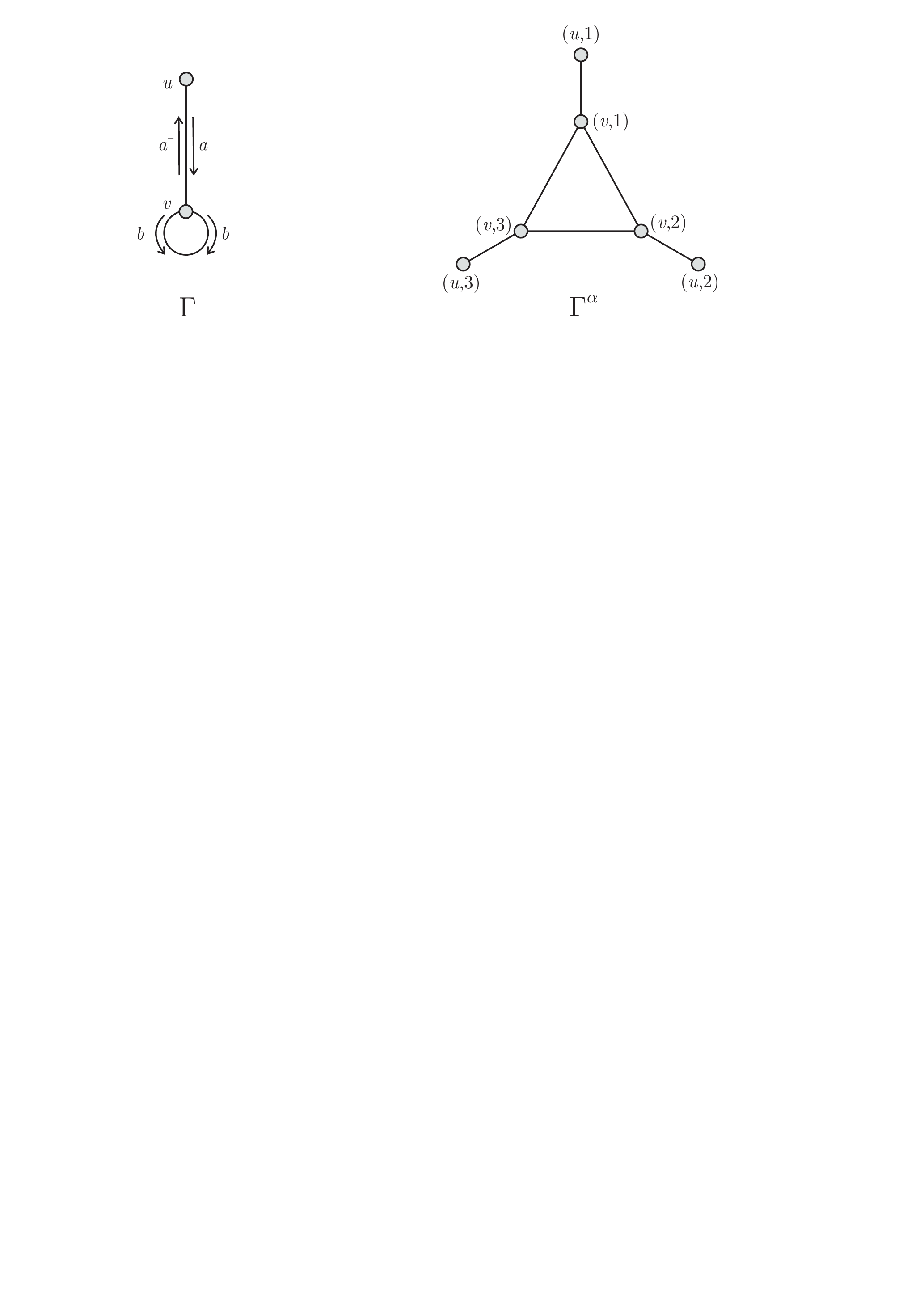}}
		\vskip-17.25cm
		\caption{A graph $\Gamma$ and its relative lift $\Gamma^{\alpha}$ in the group Sym(3). }
		\label{fig1}
	\end{center}
\end{figure}

Following the method of Theorem \ref{t:spec-univ}, we now work out some different  spectra of the relative lift shown in Figure \ref{fig1}. Referring to such a figure, we consider the base graph $\Gamma$ with vertex set $V=\{u,v\}$ and arc set $X=\{a,a^-,b,b^-\}$, where the pairs $\{a,a^-\}$, $\{b,b^-\}$ determined an edge joining $u$ to $v$, and a loop at $v$, respectively. The permutation voltage assignment $\alpha$ on $\Gamma$ in the group ${\rm Sym}(3)$ is given by $\alpha(a)=\alpha(a^-)=e$, $\alpha(b)=(123)=s$, and $\alpha(b^-)=(132)=s^2=t$. An equivalent description is to regard $\alpha$ as a relative voltage assignment, with values of $\alpha$ on arcs acting on the right cosets of $G/H$ for $H={\rm Stab}_G(1)=\{e,(23)\}$ by right multiplication. 
Then, the different base matrices with entries in the group algebra $\C(G)$ have the form

\begin{align*}
B(A) & = \left(\begin{array}{cc} 0 & \alpha(a) \\ \alpha(a^{-1}) & \alpha(b)+\alpha(b^-) \end{array}\right) = \left(\begin{array}{cc} 0 & e \\ e & s+t \end{array}\right);\\
B(D) & = \left(\begin{array}{cc} \deg(u)e & 0 \\ 0 & \deg(v)e \end{array}\right) = \left(\begin{array}{cc} e & 0 \\ 0 & 3e \end{array}\right);\\
B(I) & = \left(\begin{array}{cc} e & 0 \\ 0 & e \end{array}\right);\\
B(J) & = \left(\begin{array}{cc} e+s+t & e+s+t \\ e+s+t & e+s+t \end{array}\right).
\end{align*}

The  voltage group $G={\rm Sym}(3)=\{e,g,h,r,s,t\}$ with $g=(23)$, $h=(12)$, $r=(13)$, $s=(123)$, and $t=(132)$ has a complete set of irreducible unitary representations $\Irep(G)=\{\iota,\pi,\sigma\}$ corresponding to the symmetries of an equilateral triangle with vertices positioned at the complex points $e^{i\frac{2\pi}{3}}$, $1$, and $e^{i\frac{4\pi}{3}}$, where
\begin{align*}
\iota &: G\to \{1\},\quad \dim(\iota)=1\quad \mbox{(the trivial representation)},\\
\pi &: G\to \{\pm 1\},\quad \dim(\pi)=1\quad \mbox{(the parity permutation representation), and},\\
\sigma &:G\to GL(2,\C),\quad \dim(\sigma)=2, \mbox{ generated by the unitary matrices}
\end{align*}
\begin{equation*}
\label{eq:reprho}
\sigma(g) = \frac{1}{2}\left(\begin{array}{cc} -1 & -\sqrt{3} \\  -\sqrt{3} & 1 \end{array}\right)\qquad {\rm and} \qquad
\sigma(h) = \frac{1}{2}\left(\begin{array}{cc} -1 & \sqrt{3} \\ \sqrt{3} & 1 \end{array}\right),
\end{equation*}
whence we obtain
$$
\sigma(r)=\sigma(ghg)= \left(\begin{array}{cc} 1 & 0\\
0 & -1 \end{array}\right),\quad \sigma(s)=\sigma(gh)= \frac{1}{2}\left(\begin{array}{cc} -1 & -\sqrt{3} \\  \sqrt{3} & -1 \end{array}\right),\quad \mbox{and}
$$
$$
\sigma(t)=\sigma(hg)= \frac{1}{2}\left(\begin{array}{cc} -1 & \sqrt{3} \\  -\sqrt{3} & -1 \end{array}\right).
$$

To determine the `multiplication factors' appearing in front of the spectra in the statement of Theorem \ref{t:spec-univ}, we evaluate $\iota(H)=\iota(e)+\iota(g)=1+1=2$, of rank 1, $\pi(H)=\pi(e) + \pi(g) = 1-1=0$, of rank 0, and
\begin{equation}
\label{eq:rH}
\sigma(H)=\sigma(e) + \sigma(g) = \frac{1}{2}\left(\begin{array}{rr} 1 & -\sqrt{3} \\ -\sqrt{3} & 3\end{array}\right)
\end{equation}
of rank 1. Then, by Theorem \ref{t:spec-univ}, the spectra of the universal adjacency matrices $U^{\alpha}$ of the relative lift $\Gamma^{\alpha}$ is obtained by  the union of the sets $\Sp(\iota(U))$ and $\Sp(\sigma(U))$. Let us see some different cases.

\subsection*{Adjacency spectrum}
In this case, we only need to consider the images
of $B=B(A)$ under the representations $\iota$ and $\sigma$, which are:
\begin{equation}
\label{eq:rhoB}
\iota(B) {=}
\left(\begin{array}{cc}
 0 & 1 \\
1 & 2 \end{array}\right),\quad
\sigma(B) = \left(\begin{array}{cccc}
0 & 0 & 1 & 0 \\
0 & 0 & 0 & 1 \\
1 & 0 & -1 & 0 \\
0 & 1 & 0 & -1
\end{array}\right),
\end{equation}
with spectra $\Sp(\iota(B))=\{1\pm \sqrt{2}\}$ and $\Sp(\sigma(B))=\{[\frac{1}{2}(-1\pm\sqrt{5})]^{2}\}$. Then,
$$
\textstyle
\Sp(A^{\alpha}) = \{1\pm \sqrt{2},[\frac{1}{2}(-1\pm\sqrt{5})]^2\},
$$
where the superscripts indicate the eigenvalue  multiplicities.

\subsection*{Laplacian spectrum}
Since the Laplacian matrix can be written as $L=D-A$, we consider the base matrix
of $B(L)=B(D)-B(A)=\left(\begin{array}{cc}
 e & -e \\
 -e & 3e-s-t \end{array}\right)$
 under the representations $\iota$ and $\sigma$, which are:
\begin{equation}
\label{eq:rhoB(L)}
\iota(B(L)) {=}  \left(\begin{array}{cc}
 1 & -1 \\
-1 & 1 \end{array}\right),\quad
\sigma(B(L)) = \left(\begin{array}{cccc}
1 & 0 & -1 & 0 \\
0 & 1 & 0 & -1 \\
-1 & 0 & 4 & 0 \\
0 & -1 & 0 & 4
\end{array}\right),
\end{equation}
with spectra $\Sp(\iota(B(L)))=\{0,2\}$ and $\Sp(\sigma(B(L)))=\{[\frac{1}{2}(5\pm\sqrt{13})]^2\}$. Thus,
$$
\textstyle
\Sp(L^{\alpha}) = \{0,2,[\frac{1}{2}(5\pm\sqrt{13})]^2\}.
$$

\subsection*{Signless Laplacian spectrum}
The signless Laplacian is $Q=A+D$. So, we consider the base matrix
of $B(Q)=B(A)+B(D)=
\left(\begin{array}{cc}
e & e \\
e & 3e+s+t
\end{array}\right)$
with the representations $\iota$ and $\sigma$, as follows:
\begin{equation}
\label{eq:rho(Q)}
\iota(B(Q)) {=}
\left(\begin{array}{cc} 1 & 1 \\
 1 & 5 \end{array}\right),\quad
\sigma(B(Q)) = \left(\begin{array}{cccc}
1 & 0 & 1 & 0 \\
0 & 1 & 0 & 1 \\
1 & 0 & 2 & 0 \\
0 & 1 & 0 & 2
\end{array}\right),
\end{equation}
with spectra $\Sp(\iota(B(Q)))=\{3\pm\sqrt{5}\}$ and $\Sp(\sigma(B(Q)))=\{[\frac{1}{2}(3\pm\sqrt{5})]^2\}$. Thus,
$$
\textstyle
\Sp(Q^{\alpha}) = \{3\pm\sqrt{5},[\frac{1}{2}(3\pm\sqrt{5})]^2\}.
$$

\subsection*{Seidel spectrum}
The Seidel matrix can be defined  as $S=\overline{A}-A=J-2A-I$. Then, we consider the base matrix
of $B(S)=B(J)-2B(A)-B(I)=\left(\begin{array}{cc}
s+t & -e+s+t \\
 -e+s+t & -s-t \end{array}\right)$
under the representations $\iota$ and $\sigma$, which are:
\begin{equation}
\label{eq:rho(B(S))}
\iota(B(S)) {=}  \left(\begin{array}{cc}
2 & 1 \\
1 & -2
\end{array}\right),\quad
\sigma(B(S)) = \left(\begin{array}{cccc}
-1 & 0 & -2 & 0 \\
0 & -1 & 0 & -2 \\
-2 & 0 & 1 & 0 \\
0 & -2 & 0 & 1
\end{array}\right),
\end{equation}
with spectra $\Sp(\iota(B(S)))=\{\pm\sqrt{5}\}$ and $\Sp(\sigma(B(S)))=\{[\pm\sqrt{5}]^2\}$. Thus,
$$
\textstyle
\Sp(S^{\alpha}) = \{[\pm\sqrt{5}]^3\}.
$$

\subsection{Using ordinary voltage assignments}
Note that, in our example, the group $G$ generated by the permutation $s=(123)$ is a regular permutation on $\{1,2,3\}$, isomorphic to the cyclic group $\Z_3$. Then, the covering $\Gamma^{\beta}\to \Gamma$ is regular, and we can also construct the lift $\Gamma^{\alpha}$ from an ordinary voltage assignment on $\Z_3$.
As it is well-known, the irreducible representations $\rho_i$, for $i=0,\ldots,n-1$ of the cyclic group $\Z_n$ are
$\phi_{i}:\ 1,\omega^i,\omega^{2i},\ldots,\omega^{(n-1)i}$, for $i=0,\ldots,n-1$, where $\omega$ is a primitive $n$-th root of unity. Thus, all these representations have dimension one. In our case,
with $e=s^0$, $s$, and $s^2=s^{-1}$ being the elements of $\Z_3$, such representations are shown in Table \ref{table-Z3}.
\begin{table}[h]
\centering
\begin{tabular}{|c||c|c|c|}
\hline
$\quad g\in \Z_3$  & $e$  & $s$ & $s^2$ \\
\hline\hline
$\rho_0$ $(d_1=1)$  &  $1$  & $1$   &  $1$    \\
\hline
$\rho_1$ $(d_2=1)$  &  $1$  & $e^{i\frac{2\pi}{3}}$   &  $e^{i\frac{4\pi}{3}}$   \\
\hline
$\rho_2$ $(d_3=1)$  &  $1$  & $e^{i\frac{4\pi}{3}}$   &  $e^{i\frac{2\pi}{3}}$    \\
\hline
\end{tabular}
\caption{The irreducible representations of the cyclic group $\Z_3$.}
\label{table-Z3}
\end{table}
Then, by using the same base matrices $M\in \{B,B(L),B(Q),B(S)\}$ as before (with $t=s^2$), we get the following matrices $\rho_i(M)$, for $i=0,1,2$, together with their spectra:

\begin{itemize}
\item
{\bf Adjacency spectrum.}
\begin{align*}
\rho_0(B) & =
\left(\begin{array}{cc}
0 & 1 \\
1 & 2 \end{array}\right),\ \Sp(\rho_0(B))=\{1\pm \sqrt{2}\};\\
\rho_1(B) & =\rho_2(B) = \left(\begin{array}{cc}
0 & 1  \\
1 & -1
\end{array}\right),\ \textstyle \Sp(\rho_1(B))=\Sp(\rho_2(B))=\{\frac{1}{2}(-1\pm \sqrt{5})\}.
\end{align*}

\item
{\bf Laplacian spectrum.}
\begin{align*}
\rho_0(L(B)) &=
\left(\begin{array}{cc}
 1 & -1 \\
-1 & 1 \end{array}\right), \ \Sp(\rho_0(L(B)))=\{0,2\}; \\
\rho_1(L(B)) &=\rho_2(L(B)) =
\left(\begin{array}{cc}
1 & -1  \\
-1 & 4
\end{array}\right),\ \textstyle \Sp(\rho_1(L(B)))=\Sp(\rho_2(L(B)))=\{\frac{1}{2}(5\pm \sqrt{13})\}.
\end{align*}

\item
{\bf Signess Laplacian spectrum.}
\begin{align*}
\rho_0(Q(B)) &=
\left(\begin{array}{cc}
1 & 1 \\
1 & 5 \end{array}\right),\ \Sp(\rho_0(Q(B)))=\{3\pm \sqrt{5}\};\\
\rho_1(Q(B)) & =\rho_2(Q(B)) = \left(\begin{array}{cc}
1 & 1  \\
1 & 2
\end{array}\right),\ \textstyle \Sp(\rho_1(Q(B)))=\Sp(\rho_2(Q(B)))=\{\frac{1}{2}(3\pm \sqrt{5})\}.
\end{align*}

\item
{\bf Seidel spectrum.}
\begin{align*}
\label{eq2:rhoB}
\rho_0(S(B)) &=
\left(\begin{array}{cc}
 2 & 1 \\
1 & -2 \end{array}\right),\ \Sp(\rho_0(S(B)))=\{\pm \sqrt{5}\};\\
\rho_1(S(B)) & =\rho_2(S(B)) = \left(\begin{array}{cc}
-1 & -2  \\
-2 & 1
\end{array}\right),\ \textstyle \Sp(\rho_1(S(B)))=\Sp(\rho_2(S(B)))=\{\pm\sqrt{5}\}.
\end{align*}
\end{itemize}

From these spectra, and applying \eqref{eq:t:spec-univ-ord}, we obtain again the different spectra of our example.

As a last comment, note that, in this context of ordinary voltage assignments on a group $G$ of order $n$,  of a base graph $\Gamma$, the base matrix of $J$ has entries
$(J)_{uv}=\sum_{g\in G} g$. Then, from representation theory, the matrices $\rho_0(J)$ and $\rho_i(J)$ for $i\neq 0$ turn out to be $nJ$ and $O$ (the $0$-matrix with appropriate dimension), respectively. Thus, for $i\neq 0$, $\rho_i(L(B))=-2\rho_i(A)-\rho_i(I)=-2\rho_i(A)-I$, and we can state the following lemma (see the above results for an example).
\begin{lemma}
Let $\Gamma$ be a base graph with  ordinary voltage assignment $\alpha$ on a group $G$ of order $n$.
Let $B$ be the base matrix of  $\Gamma$.
Then, apart from the eigenvalues of $nJ-2B-I$, the other Seidel eigenvalues of the lift $\Gamma^{\alpha}$ are of the form
$-2\lambda-1$, where $\lambda$ is an adjacency eigenvalue of $\Gamma^{\alpha}$ $($that is, $\lambda\in \Sp(\rho_i(B))$ for $i\neq 0$\/$)$.
\end{lemma}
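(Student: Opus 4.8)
The plan is to unwind the definitions of the Seidel base matrix and the Laplacian/adjacency base matrices in the special case of an ordinary voltage assignment, and then split the analysis according to whether the irreducible representation $\rho_i$ is trivial ($i=0$) or not. First I would recall the identity $S = J - 2A - I$ at the level of the base matrix, so that $B(S) = B(J) - 2B - B(I)$, where $B = B(A)$ is the adjacency base matrix and $B(I)$ is the identity base matrix with diagonal entries equal to the group identity $e$. Applying a one-dimensional irreducible representation $\rho_i$ of the abelian group $G$ entrywise, this reads $\rho_i(B(S)) = \rho_i(B(J)) - 2\rho_i(B) - \rho_i(B(I)) = \rho_i(B(J)) - 2\rho_i(B) - I$, since $\rho_i(e)=1$ sends the identity base matrix to the $k\times k$ identity.

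The key computational input, which the paragraph preceding the lemma already supplies, is the behavior of $\rho_i(B(J))$. Here every entry of $B(J)$ equals $\sum_{g\in G} g$, and since for an ordinary (regular) voltage assignment the $\rho_i$-image replaces each group element $g$ by the scalar $\rho_i(g)$, the entry becomes $\sum_{g\in G}\rho_i(g)$. By the standard orthogonality/character relation for irreducible representations, this sum equals $n$ when $\rho_i$ is trivial and equals $0$ otherwise. Consequently $\rho_0(B(J)) = nJ$ (the $k\times k$ all-ones matrix scaled by $n$), while $\rho_i(B(J)) = O$ for every $i\neq 0$. This is precisely the dichotomy I want to exploit.

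From this dichotomy the two cases of the lemma fall out directly. For $i=0$, substituting gives $\rho_0(B(S)) = nJ - 2B - I$, so the Seidel eigenvalues arising from the trivial representation are exactly the eigenvalues of $nJ - 2B - I$; by the decomposition \eqref{eq:t:spec-univ-ord} these contribute to $\Sp(S^\alpha)$ with multiplicity $\dim(\rho_0)=1$. For each $i\neq 0$, the all-ones contribution vanishes and we get $\rho_i(B(S)) = -2\rho_i(B) - I$. If $\lambda$ is an eigenvalue of $\rho_i(B)$ with eigenvector $\mathbf{x}$, then $\rho_i(B(S))\mathbf{x} = (-2\lambda - 1)\mathbf{x}$, so the Seidel eigenvalues coming from the nontrivial representations are exactly the values $-2\lambda - 1$ with $\lambda \in \Sp(\rho_i(B))$, $i\neq 0$. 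Since the adjacency eigenvalues of the lift contributed by these same nontrivial representations are precisely the $\lambda\in\Sp(\rho_i(B))$ by \eqref{eq:t:spec-univ-ord} applied to the adjacency matrix, this is the asserted relationship, and invoking \eqref{eq:t:spec-univ-ord} once more assembles the full Seidel spectrum as the union of the $i=0$ part with the $i\neq 0$ part.

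I do not expect a genuine obstacle here: the entire argument is a bookkeeping exercise once the character sum $\sum_{g\in G}\rho_i(g) \in \{n,0\}$ is in hand. The only point requiring a little care is making sure the multiplicities and index bookkeeping in \eqref{eq:t:spec-univ-ord} are applied consistently to both the adjacency and the Seidel matrices simultaneously, so that the phrase ``apart from the eigenvalues of $nJ-2B-I$'' correctly isolates the $i=0$ contribution and the remaining statement refers to the pooled $i\neq 0$ spectra. Since all $\rho_i$ are one-dimensional for the cyclic (indeed any abelian) $G$, there is no subtlety with $\rank(\rho_i(H))$ beyond the trivial-subgroup case, and the eigenvector transfer $\mathbf{x}\mapsto\mathbf{x}$ between $\rho_i(B)$ and $\rho_i(B(S))$ is literally the identity because the two matrices differ only by the scalar combination $-2(\cdot)-I$.
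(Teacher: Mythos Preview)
Your approach coincides with the paper's own argument, which is contained in the paragraph immediately preceding the lemma: the key input is that $\rho_0(B(J))=nJ$ while $\rho_i(B(J))=O$ for $i\neq 0$, whence $\rho_i(B(S))=-2\rho_i(B)-I$ for $i\neq 0$, and the eigenvalue relation $-2\lambda-1$ follows at once.

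One point to tighten: the lemma is stated for an \emph{arbitrary} finite group $G$, not just an abelian one, so your standing assumption that all $\rho_i$ are one-dimensional does not match the generality claimed. This is not a real obstacle, since the vanishing $\rho\bigl(\sum_{g\in G}g\bigr)=0$ holds for every non-trivial irreducible representation $\rho$ of any dimension (by Schur's lemma together with orthogonality of characters), and the identity $\rho_i(B(S))=-2\rho_i(B)-I$ remains valid with $I$ the $d_{\rho_i}k\times d_{\rho_i}k$ identity; you should simply drop the abelian hypothesis and phrase the character-sum step accordingly.
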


\vskip 1cm

\noindent{\bf Acknowledgments.}~ The research of the two first authors is partially supported by
AGAUR from the Catalan Government under project 2017SGR1087 and by MICINN from the Spanish Government under project PGC2018-095471-B-I00. The research of the first author has also been supported by MICINN from the Spanish Government under project MTM2017-83271-R. The third and the fourth authors acknowledge support from the APVV Research Grants 15-0220 and 17-0428, and the VEGA Research Grants 1/0142/17 and 1/0238/19.

\newpage

\end{document}